\def\p{\partial}
\def\R{\mathbb{R}}
\def\C{\mathbb{C}}
\def\i{\sqrt{-1}}
\def\t{\triangle}
\def\G{{\mathfrak G}}
\def\g{{\mathfrak g}}
\def\h{{\mathfrak h}}
\def\cD{\mathcal D}
\def\cF{{\mathcal F}}
\def\cG{{\mathcal G}}
\def\cH{{\mathcal H}}
\def\cJ{{\mathcal J}}
\def\cK{{\mathcal K}}
\def\cM{{\mathcal M}}
\def\cP{{\mathcal P}}
\def\cQ{{\mathcal Q}}
\def\cS{{\mathcal S}}
\def\cY{{\mathcal Y}}
\numberwithin{equation}{section}
\newtheorem{prop}{Proposition}[section]
\newtheorem{theo}[prop]{Theorem}
\newtheorem{rmk}[prop]{Remark}
\newtheorem{defi}[prop]{Definition}
\newtheorem{pr}[prop]{Problem}
\begin{document}
\title[Scalar curvature on Sasaki manifolds]{On the transverse Scalar Curvature of a Compact Sasaki Manifold}\author{Weiyong HE}

\begin{abstract} We show that the {\it standard picture} regarding the notion of stability of constant scalar curvature metrics in K\"ahler geometry described by S.K. Donaldson \cite{Donaldson97, Donaldson99}, which involves the geometry of infinite-dimensional groups and spaces,  can be applied to the constant scalar curvature  metrics in Sasaki geometry with only few modification. We prove that the space of Sasaki metrics is an infinite dimensional symmetric space and that the transverse scalar curvature of a Sasaki metric is a moment map of the strict contactomophism group.  \end{abstract}

\address{Department of Mathematics, University of Oregon, Eugene, OR, 97403}
\email{whe@uoregon.edu}

\thanks{The author is partially supported by National Science Foundation}

\subjclass[2010]{Primary 53D20}

\date{}

\dedicatory{}

\keywords{Transverse Scalar curvature, Symmetric space, Moment map}

\maketitle

\section{Introduction}
Sasaki geometry, in particular Sasaki-Einstein manifolds, has been extensively studied.  Readers are referred to recent monograph Boyer-Galicki \cite{BG}, and recent survey paper Sparks \cite{Sparks} and the references in for history, background and recent progress of Sasaki geometry and Sasaki-Einstein manifolds. 

Sasaki geometry is often described as an odd-dimensional analogue of K\"ahler geometry. Some important results in K\"ahler geometry have now a counterpart in the Sasaki context.  For example, Calabi's extremal problem in K¬ahler geometry has been extended to the Sasaki manifolds and some important results have been obtained \cite{BGS1, BGS2, FOW}.  In this note we shall discuss the notion of stability of constant scalar curvature metrics in Sasaki context. One can see that the picture regarding the stability of constant scalar curvature described by S.K. Donaldson \cite{Donaldson97, Donaldson99} can be carried over to the Sasaki setting with only slight modification. 

It is well known in K\"ahler geometry that there are  obstructions to the existence of extremal metrics in general \cite{M, L, Futaki83, Calabi85};  there is now a very beautiful picture regarding the existence of constant scalar curvature metrics on algebraic varieties, which is conjectured to be equivalent to certain  notion of  stability in algebraic geometry \cite{Yau90, Tian97, Donaldson02}; this picture can also be generalized to extremal metrics \cite{Mabuchi04, Sze}.  There has also been important progress to understand obstructions of canonical metrics in Sasaki geometry \cite{ GMSY, FOW, BGS1}. In particular, Gauntlett-Martelli-Sparks-Yau \cite{GMSY} has proved the {\it Lichnerowicz obstruction} and the {\it Bishop obstruction} to existence of Sasaki Einstein metrics.  Their results  provide new obstructions to  existence of K\"ahler-Einstein metrics on K\"ahler orbifolds. Recently Ross-Thomas \cite{RT} studied  stability of K\"ahler orbifolds and constant scalar curvature metrics on K\"ahler orbifolds.

In \cite{Donaldson97, Donaldson99},  Donaldson suggested that the stability of K\"ahler-Einstein metrics \cite{Yau90, Tian97} fits into a general framework, involving the geometry of infinite dimensional groups and spaces; from this point of view,  the notion of stability  can be extended to, for example,  the metrics of constant scalar curvature. 
One of the key observations relating  canonical metrics in K\"ahler geometry  with the geometry of infinite dimensional groups and spaces is that the scalar curvature of a K\"ahler manifold $(V, \omega)$ can be interpreted as a moment map of symplectomorphism group of $(V, \omega)$ \cite{Fujiki90, Donaldson97}. We shall show similar pictures described  in \cite{Donaldson97, Donaldson99} can be extended to Sasaki geometry with only slight modification.  Our discussion follows closely the discussion by S.K. Donaldson \cite{Donaldson97, Donaldson99} in (almost) K\"ahler setting. While the picture described by Donaldson stimulates tremendous work in Kahler geometry; it might  still be worth to show that the similar picture holds for the Sasaki case, even though there is no new idea beyond \cite{Donaldson97, Donaldson99} and there is only few modification to prove the results.

In Section 3 we consider the space of Sasaki metrics studied in \cite{GZ1, GZ2} , which is the analogue of the space of K\"ahler metrics as in \cite{Mabuchi87, Semmes96, Donaldson99}.  Guan-Zhang \cite{GZ1, GZ2} studied the geometry of the space of Sasaki metrics, in particular the geodesic equations. They proved, among others,  the regularity results of the Dirichlet problem of  the geodesic equation with applications in Sasaki geometry, parallel  the results of Chen \cite{Chen00} and Calabi-Chen \cite{CalabiChen} in the space of K\"ahler metrics. As a consequence of their results, the space of Sasaki metrics is a non-postively curved metric space. We observe that  the space of Sasaki metrics is an infinite dimensional symmetric space as in the K\"ahler setting  \cite{Mabuchi87, Semmes96, Donaldson99}.  Moreover, $\cH$  can be viewed as the dual space of the strict contactomorphism group $\cG$.

In Section 4 we show that the transverse scalar curvature of a Sasaki metric (K-contact metric) can  be interpreted as a moment map of strict contactomorphism group; hence the ``standard picture" described in \cite{Donaldson97, Donaldson99} can be extended to Sasaki geometry. Thus  it is not a surprise that the existence of canonical metrics in Sasaki geometry should also be relevant to the notion of stability, see \cite{RT} for recent progress. Following \cite{Donaldson99},  we shall present a problem regarding nonexistence of  constant transverse scalar curvature metrics, using the notion of  geodesic rays and the $K$-energy in Sasaki geometry \cite{Mabuchi86, GZ2}.

\section{Sasaki Geometry, the Contact Structure and the Transverse K\"ahler Structure}
A Sasaki manifold involves  many interesting structures, including its underlying contact structure and the transverse K\"ahler structure. In this section we recall these relevant structures on Sasaki manifolds, or more generally, K-contact manifolds.

Let $(M, \eta)$ be a compact contact manifold of dimension $2n+1$ with a contact 1-form $\eta$ such that $\eta\wedge (d\eta)^n$ is nowhere vanishing. The Reeb vector field $\xi$ is define uniquely by
\[
\eta(\xi)=1, \iota_\xi d\eta=0. 
\]
The $1$-dimensional foliation generated by $\xi$ is called the {\it Reeb foliation}. 
The contact form $\eta$ defines a vector sub-bundle $D$ of the tangent bundle $TM$ such that
 $\cD=ker(\eta)$  and $TM=\cD\oplus L\xi$, where $L\xi$ is the trivial line  bundle generalized by $\xi$.
A contactomorphism $f: M\rightarrow M$ is a diffeomorphism which satisfies $f^{*}\eta=\exp(F)\eta$ for some function $F: M\rightarrow \R$. The group $\G$ of contactomorphisms leaves  $\cD=ker(\eta)$ invariant. With the 1-form $\eta$ fixed we can also define the group $\cG$ of strict contactomorphisms by the condition $f^{*}\eta=\eta$; it is clear that $\cG$ is a subgroup of $\G$.  We denote the Lie algebra of 
$\G$ and $\cG$ by $\h$ and $\g$ respectively.  The Lie algebra  can be characterized by
\[
\begin{split}
\h&=\{X\in \Gamma(TM), L_X\eta=F\eta~ \mbox{for some}~ F\in C^\infty(M)\}, \\
\g&=\{X\in \Gamma(TM), L_X\eta=0\}. 
\end{split}
\]
A vector field $X$ is called contact if $L_X\eta= F \eta$ and it is called strict contact if $L_X\eta=0$. There is a Lie algebra isomorphism between the 
 space $C^\infty (M)$ of functions on $M$ and the Lie algebra $\h$,  see \cite{MS, BG} for the details. 
 For every function $H: M\rightarrow \R$, there exists a unique contact vector field $X=X_H: M\rightarrow TM$ which satisfies 
\[
H=\eta(X), \iota_X d\eta=dH(\xi)\eta-dH. 
\]
  The Possion bracket is then defined by
 \[
 \{F, H\}=\eta([X_F, X_H]).
 \]
 There is also a natural  $L^2$ inner product on $C^\infty(M)$:
\[
\langle f, h \rangle=\int_M f h d\mu,
\]
where $d\mu=(2^nn!)^{-1}\eta\wedge (d\eta)^n$ is a volume form  determined by $\eta$. However this $L^2$ product is not $\G$-invariant. Instead we consider the subgroup $\cG$ and the space $C^\infty_B(M)$ of basic functions on $M$, where 
\[
C_B^\infty(M)=\{\phi\in C^\infty(M): d\phi (\xi)=0\}. 
\]
Then $C^\infty_B(M)$ is a sub-algebra of $C^\infty(M)$ and it is isomorphic to the Lie algebra of $\g$ (for example, see \cite{BG} Section 6.1).  The $L^2$ inner product, restricted on $C^\infty_B(M)$, is $\cG$-invariant.  We can write $C^\infty_B(M)=C^\infty_{B, 0}(M)\oplus \R,$ where $C^\infty_{B, 0}(M)$ is the space of functions of integral zero, the $L^2$ orthogonal complement of the constants.  
Now the groups $\cG$ has a bi-invariant metric defined by $L^2$ inner product on its Lie algebra, so it furnishes an example of infinite dimensional symmetric space.  When $M$ is a Sasaki manifold, we shall show in next section that this space has a {\it negatively curved dual}. 

First we recall the {\it $K$-contact structure}. Let $(M^{2n+1}, \eta, \xi)$ be a compact contact manifold. 

\begin{defi} A $(1, 1)$-tensor field $\Phi: TM\rightarrow TM$ is called an  almost contact-complex structure if
\[
\Phi \xi=0, \Phi^2=-id+\xi\otimes \eta. 
\]
It is called an  K-contact-complex structure if in addition, $L_\xi \Phi=0$. An almost contact-complex structure is compatible with $\eta$ if
\[
d\eta(\Phi X, \Phi Y)=d\eta(X, Y); d\eta(X, \Phi X)>0, X\in \cD, X\ne 0. 
\]
\end{defi}
If $\Phi$ is compatible with $\eta$,  $(M, \eta, \Phi)$ defines a Riemannian metric 
\[
g(X, Y)=\frac{1}{2}d\eta(X, \Phi Y)+\eta(X)\eta(Y),
\]
and $(M, \eta, \xi, \Phi, g)$ is called a contact metric structure. This metric structure is called a K-contact metric structure if  $L_\xi \Phi=0$, which corresponds  to an almost K\"ahler metric on a symplectic manifold. A Sasaki structure can then be defined as a K-contact metric structure $(M, \eta, \xi, \Phi, g)$ where $\Phi$ satisfies an integrable condition
\[
\nabla \Phi(X, Y)=g(\xi, Y)X-g(X, Y)\xi. 
\] 
This definition is equivalent to the following characterization: a Riemannian  metric $(M, \eta, \xi, \Phi,  g)$ is called Sasaki if the cone metric $(C(M)=M\times \R_{+}, dr^2+r^2 g)$ is K\"ahler. The complex structure $J$ on $C(M)$ can be defined as
\[
J\left(r\frac{\p}{\p r}\right)=\xi, JY=\Phi Y-\eta(Y) r\frac{\p }{\p r}, ~Y\in \Gamma(TM). 
\]
For a Sasaki structure, there are two relevant K\"ahler structures. One is the K\"ahler cone structure on $C(M)$ and the other is  the transverse K\"ahler structure for the Reeb foliation. For our purpose, we shall also consider the {\it transverse almost K\"ahler} structure for a K-contact metric structure $(M, \eta, \xi, \Phi, g)$. The discussion is similar as in  the transverse K\"ahler structure \cite{BG, FOW}.
We shall describe the transverse almost K\"ahler structure both globally and locally.

Let $\{U_\alpha\}_{\alpha\in I}\subset \R^{2n+1}$ be an open covering of $M$  and let $\pi_\alpha: U_\alpha\rightarrow V_\alpha\subset \R^{2n}$ be submersions such that 
$d\pi_\alpha(\xi)$=0 and when $U_\alpha\cap U_\beta\neq \emptyset$
\[
\pi_\alpha\circ \pi_\beta^{-1}: \pi_\beta(U_\alpha\cap U_\beta)\rightarrow \pi_\alpha(U_\alpha\cap U_\beta)
\]
is diffeomorphic. On each $V_\alpha$, we can define an almost K\"ahler metric as follows. 

Let $(x^1, x^2, \cdots, x^{2n})$ be a local coordinate on $V_\alpha$. We can pull this back on $U_\alpha$ and still write them as $x^1, x^2, \cdots, x^{2n}$. Let $x^0$ be the coordinate along the leaves with $\xi=\frac{\p}{\p x^0}$. Then $(x^0, x^1, \cdots, x^{2n})$ forms a local coordinate on $U_\alpha$. Suppose we can write $\eta=dx^0-a_idx^i$ locally for some functions $a_i$, $1\leq i\leq 2n$. Since $\iota_\xi d\eta=0$, it is clear that $da_i(\xi)=0$ and $a_i$ is a function of $x^1, \cdots, x^{2n}$. We can then define
\[
\omega_\alpha:=\frac{1}{2}\left(\frac{\p a_i}{\p x^j}-\frac{\p a_j}{\p x_i}\right)dx^i\wedge dx^j= \omega_{ij}dx^i\wedge dx^j. 
\]
It is clear that $\omega_\alpha$ coincides with $\frac{1}{2}d\eta$ on  $\{x^0=const\}\subset U_\alpha$. We also get that  $\cD$ is spanned by the vectors of the form
\[
e_i=\frac{\p }{\p x^i}+a_i\frac{\p}{\p x^0}, i=1, 2, \cdots, 2n. 
\]
For any $p\in U_\alpha$ there is an isomorphism induced by $\pi_\alpha$
\[
d\pi_\alpha: \cD_p\rightarrow T_{\pi_{\alpha(p)}}V_\alpha; \; d\pi_\alpha (e_i)=\frac{\p}{\p x^i}.  
\]
An almost contact-complex structure $K$ can be written as
\[
\Phi e_i=\Phi_i^j e_j, \;\Phi_i^j\Phi_j^k=-\delta_{i}^k. 
\]
A simple computation shows that the condition $\Phi_\xi K=0$ implies that \[\frac{\p \Phi_i^j}{\p x^0}=0.\] Hence we can define an almost complex structure on $V_\alpha$ such that
\[
\Phi_\alpha \frac{\p}{\p x_i}=\Phi_i^j\frac{\p}{\p x_j}. 
\]
It is clear that $\Phi_\alpha$ is compatible with $\omega_\alpha$; thus we can define an almost  K\"ahler  metric $g^T_\alpha$ on $V_\alpha$. By this construction, 
\[
\pi_\alpha\circ \pi_\beta^{-1}: \pi_\beta(U_\alpha\cap U_\beta)\rightarrow \pi_\alpha(U_\alpha\cap U_\beta)
\]
gives an isometry of almost K\"ahler manifolds. The collection of almost K\"ahler metrics $\{V_\alpha, g^T_\alpha\}$ is called a transverse almost K\"ahler metric, which we denote by $g^T$ since they are isometric on the overlaps.  We also write $\nabla^T, Rm^T, Ric^T, R^T$ for its Levi-Civita connection, the curvature tensor, the Ricci tensor and the scalar curvature. 
It should be emphasized that,  when restricted on $\cD$,  $\{d\eta/2, \Phi, g\}$ on $U_\alpha$ is isometric to $\{\omega_\alpha, \Phi_\alpha, g^T_\alpha\}$ via $d\pi_\alpha: \cD_p\rightarrow T_{\pi_\alpha(p)}V_\alpha$. So  we can also define $\{\cD, d\eta/2, \Phi|_\cD, g|_\cD\}$ as an almost transverse K\"ahler structure via this isomorphism.  This isomorphism will play an important role in our computations in the following sections. 
The transverse scalar curvature  $R^T$ also lifts to $M$ as a global function.
When $(M, \eta, \xi, \Phi, g)$ is Sasaki,  it is easy to see that $\Phi_\alpha$ is a complex structure on $V_\alpha$, hence it defines  the {\it transverse complex structure} or {\it transverse holomorphic structure} on the Reeb foliation. In particular, we can choose coordinates charts $V_\alpha\subset \C^n$ with coordinates $(z^1, \cdots, z^n)$ such that
\[
\Phi_\alpha\frac{\p}{\p z_i}=\sqrt{-1} \frac{\p}{\p z_i};
\]
this with $\omega_\alpha$ gives its  transverse K\"ahler structure (for example see \cite{FOW} Section 3). The corresponding  charts $U_\alpha$ with coordinates $(x^0, z^1, \cdots, z^n)$
are called foliations charts. 

Now let $(M, \eta, \xi, \Phi)$ be a compact Sasaki manifold. 
\begin{defi} A $p$-form $\theta$ on $M$ is called basic if
\[
\iota_\xi \theta=0, L_\xi \theta=0.
\]
Let $\Lambda^p_B$ be the sheaf of germs of basic $p$-forms and $\Omega^p_B=\Gamma(S, \Lambda^p_B)$ the set of sections of $\Lambda^p_B$.  
\end{defi}

The exterior differential preserves basic forms. We set $d_B=d|_{\Omega^p_B}$. 
There is a natural splitting of $\Lambda^p_B\otimes \C$ such that
\[
\Lambda^p_B\otimes \C=\oplus \Lambda^{i, j}_B,
\]
where $\Lambda^{i, j}_B$ is the bundle of type $(i, j)$ basic forms. We thus have the well defined operators (for example, see Section 4 \cite{FOW})
\[
\begin{split}
\p_B: \Omega^{i, j}_B\rightarrow \Omega^{i+1, j}_B,\\
\bar\p_B: \Omega^{i, j}_B\rightarrow \Omega^{i, j+1}_B.
\end{split}
\]
Then we have $d_B=\p_B+\bar \p_B$. 
Set $d^c_B=\frac{1}{2}\i\left(\bar \p_B-\p_B\right).$ It is clear that
\[
d_Bd_B^c=\i\p_B\bar\p_B, d_B^2=(d_B^c)^2=0.
\]
Using the foliation chart, we can also check that
\[
d_B^c=\frac{1}{2}\i^{i-j}\Phi\circ d_B\; \mbox{on} \;\Omega^{i, j}_B. 
\]

 As in \cite{GZ1, GZ2}, we consider the space of Sasaki metrics as follows
\[
\cH=\{\phi\in C^\infty_B(M), \eta_\phi\wedge (d\eta_\phi)^n\neq 0, \eta_\phi=\eta+d^c_B\phi
\},\]
which can be viewed as an analogue of the space of K\"ahler metrics in a fixed K\"ahler class studied in \cite{Mabuchi87, Semmes96, Donaldson99}. 
For any $\phi\in \cH$, we can define a new Sasaki metric $(\eta_\phi, \xi, \Phi_\phi, g_\phi)$ with the same Reeb vector field $\xi$ such that
\[
\eta_\phi=\eta+d^c_B\phi, \Phi_\phi=\Phi-\xi\otimes d^c_B\phi \circ \Phi.
\]
Note that $\cD$ varies and the K-contact structure $\Phi_\phi$ varies when $\phi$ varies but $(\eta_\phi, \xi, \Phi_\phi, g_\phi)$ has the same transverse holomorphic structure as $(\eta, \xi, \Phi, g)$ (Prop. 4.1 in \cite{FOW}, \cite{BG}); for example, we have the relation when $\phi$ varies, 
\[
\Phi\circ d_B=\Phi_\phi\circ d_B. 
\]
On the contrary,  if $(\tilde \eta, \xi, \tilde \Phi, \tilde g)$ is another Sasaki structure  with the same Reeb vector field $\xi$ and the same transverse K\"ahler structure, then there exists a unique function $\phi\in \cH$ up to addition of a constant (e.g. \cite{EKA}) such that \[d\tilde \eta=d\eta_\phi=d\eta+\i \p \bar \p \phi.\]
We shall denote this set by $\cS(\xi, \bar J)$ as in \cite{BGS1}, where $\bar J$ denotes a fixed transverse homomorphic structure on $\nu(\cF_\xi)$. 
Note that $\tilde \eta$ does not have to be $\eta_\phi$, but the transverse geometry of $\tilde g$ is determined by $d\tilde \eta, \bar J$. Hence 
$\cS(\xi, \bar J)$ can be viewed as the analogue of the set of K\"ahler metrics in a fixed K\"ahler class.  Boyer-Galicki-Simanca \cite{BGS1, BGS2} proposed to seek the extremal Sasaki metrics  to represent $\cS(\xi, \bar J)$, by extending Calabi's extremal problem to Sasaki geometry (see \cite{FOW} also).  A metric is called {\it a transverse extremal K\"ahler metric}.  To seek an extremal metric in $\cS(\xi, \bar J)$
is then reduced to find a function $\phi\in \cH$ such that the Sasaki metric $(\eta_\phi, \xi, \Phi_\phi, g_\phi)$ is transverse extremal.  

We conclude this section with the following proposition, which will be used in the following sections.  Let $(M, g)$ be a Sasaki metric and $g^T$ be its transverse  K\"ahler metric. Let  $U_\alpha, (x^0, z^1, \cdots, z^n)$ be a foliation chart and $\pi_\alpha: U_\alpha\rightarrow V_\alpha$ such that
\[
d\pi_\alpha: D_p\rightarrow T_{\pi_\alpha(p)}V_\alpha. 
\]
Let 
\[g^T_{i\bar j}=g^T\left(\frac{\p}{\p z_i}, \frac{\p}{\p \bar z_{j}}\right)
\]
and $g^{i\bar j}_T$ be its inverse. 
It is clear that,  for a basic function $\phi$,
\[
\t^T \phi=g^{i\bar j}_T \frac{\p^2 \phi}{\p z_i\p \bar z_j} 
\]
lifts to $M$ hence defines a function on $M$. 
 Let $\phi, \psi$ be  basic functions, then $d\pi_\alpha(\nabla \phi)=\nabla^T\phi$ and we have
 \[
  (d_B \phi, d_B\psi)_g=(\nabla \phi, \nabla \psi)_g=(\nabla^T \phi, \nabla^T \psi)_{g^T}. \] It is also clear that for a basic function $\phi$,
 \[
 \t^T \phi=\t \phi.
 \]

\section{The Space of Sasaki Metrics as a Symmetric Space}

In this section we show that $\cH$  is an infinite-dimensional symmetric space, as in K\"ahler setting \cite{Mabuchi87, Semmes96, Donaldson99}.
It can be viewed as the negatively curved dual space of $\cG$, see  \cite{Donaldson99} for the K\"ahler setting.
 
Let $(M, \xi, \eta,  \Phi)$ be a compact Sasaki manifold. We shall briefly recall the geometric structure on $\cH$ introduced in \cite{GZ1}. 
For any $\phi\in \cH$, we can define a metric on $T_\phi\cH$
\[
\langle \psi_1, \psi_2\rangle_\phi=\int_M \psi_1\psi_2 d\mu_\phi, \forall \psi_1, \psi_2\in T\cH,
\]
where $d\mu_\phi=(2^nn!)^{-1}\eta_\phi\wedge (d\eta_\phi)^n$ is the volume form determined by $g_\phi$. If $\phi(t): [0, 1]\rightarrow \cH$ is a path, 
the geodesic equation can be written as 
\[
\ddot\phi-\frac{1}{4}|d_B\dot\phi|^2_\phi=0.
\]
Thus  $\psi(t)$ is a field of tangent vectors along the path $\phi(t)\in \cH$, the covariant derivative along the path is given by
\[
D_{\dot\phi}\psi=\frac{d}{dt}\psi-\frac{1}{4}(d_B \dot\phi, d_B\psi)_\phi.
\] 
Guan-Zhang  proved  \cite{GZ1} that this connection is torsion free and compatible with the metric
\[
\frac{d}{dt}\|\psi\|^2_\phi=2\langle D_t \psi, \psi \rangle;
\]
They also proved that  the corresponding  sectional curvature of $\cH$ is nonpositive and there is a Riemannian decomposition
\[
\cH=\cH_0\times \R,
\]where $\cH_0=\cH/\R$. 

\begin{theo}$\cH$ is an infinite dimensional symmetric space; the curvature of the connection $D$ is covariant constant. At a point $\phi\in \cH$ the curvature is given by
\[
R_\phi(\psi_1, \psi_2)\psi_3=-\frac{1}{16}\{\{\psi_1, \psi_2\}_\phi, \psi_3\}_\phi,
\]
where $\{   \;,   \;\}_\phi$ is the Possion bracket on $C^\infty_B(M)$ induced by the contact form $\eta_\phi$. 
\end{theo}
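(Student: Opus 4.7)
The plan is to adapt the K\"ahler computation of Mabuchi, Semmes, and Donaldson \cite{Mabuchi87, Semmes96, Donaldson99} to the Sasaki setting, working throughout with the transverse K\"ahler structure of $(\eta_\phi,\xi,\Phi_\phi,g_\phi)$ and the Poisson bracket on basic functions. Because $\cH$ is an open subset of the vector space $C^\infty_B(M)$, every $\psi\in T_\phi\cH$ extends canonically to a constant vector field on $\cH$; for such extensions of $\psi_1,\psi_2,\psi_3$ the Lie bracket $[\psi_1,\psi_2]$ vanishes, so at $\phi$ the curvature reduces to
\[
R_\phi(\psi_1,\psi_2)\psi_3 = D_{\psi_1}D_{\psi_2}\psi_3 - D_{\psi_2}D_{\psi_1}\psi_3.
\]

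First I would expand this directly from $D_{\dot\phi}\psi = \dot\psi - \tfrac{1}{4}(d_B\dot\phi,d_B\psi)_\phi$. Writing $D_{\psi_2}\psi_3 = -\tfrac{1}{4}(d_B\psi_2,d_B\psi_3)_\phi$ and differentiating along the line $\phi+t\psi_1$, one gets
\[
D_{\psi_1}D_{\psi_2}\psi_3 = -\frac{1}{4}\frac{d}{dt}\bigg|_{t=0}(d_B\psi_2,d_B\psi_3)_{\phi+t\psi_1} + \frac{1}{16}\bigl(d_B\psi_1,d_B(d_B\psi_2,d_B\psi_3)_\phi\bigr)_\phi,
\]
and symmetrically with $\psi_1,\psi_2$ swapped. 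In a foliation chart the variation $\delta g^T_{i\bar j}=\psi_{1,i\bar j}$ makes the time-derivative term a polynomial in $g^{i\bar j}_\phi$ and the first two basic derivatives of $\psi_2,\psi_3$; antisymmetrizing in $\psi_1,\psi_2$ eliminates the symmetric contributions and leaves an expression whose target form is $-\tfrac{1}{16}\{\{\psi_1,\psi_2\}_\phi,\psi_3\}_\phi$.

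The main step, and where the real work lies, is to identify this antisymmetrization with the double Poisson bracket. For basic $F$ the contact Hamiltonian vector field splits as $X_F = F\xi+\tilde X_F$ with $\tilde X_F\in\cD$ and $\iota_{\tilde X_F}d\eta_\phi=-dF$; hence $\tilde X_F$ is a universal constant multiple of the transverse Hamiltonian vector field of $F$, and in a foliation chart
\[
\{F,H\}_\phi = c\,g^{i\bar j}_\phi\bigl(F_i H_{\bar j} - F_{\bar j}H_i\bigr)
\]
for an explicit constant $c$. Substituting this into the antisymmetrized curvature, applying $(d_B u,d_B v)_g=(\nabla^T u,\nabla^T v)_{g^T}$ and $\triangle^T u=\triangle u$ on basic functions from the end of Section 2, and exploiting the Leibniz and Jacobi identities for $\{\,,\,\}_\phi$, the contracted $g^{i\bar j}_\phi$-terms should collapse onto the iterated Poisson bracket with overall coefficient $-1/16$. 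This combinatorial bookkeeping is the hardest step; it is essentially a transcription of the K\"ahler calculation in \cite{Donaldson99}, with care needed to check that terms in the $\xi$-direction drop out because the $\psi_i$ are basic.

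Covariant constancy $DR=0$, and hence the symmetric-space conclusion, then follows formally. If $\phi(t)$ is a geodesic in $\cH$ and $\psi_i(t)$ are parallel along it, the Sasaki analogue of Donaldson's observation is that parallel transport is generated by a family of strict contactomorphisms; these preserve the Poisson structure, so $\{\psi_1(t),\psi_2(t)\}_{\phi(t)}$ is again parallel. Combined with the geodesic symmetry $\phi\mapsto 2\phi_0-\phi$ at any base point $\phi_0$, which is readily seen to be an isometry with differential $-\mathrm{id}$, this yields the infinite-dimensional symmetric space structure on $\cH$.
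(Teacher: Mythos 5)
Your proposal follows essentially the same route as the paper: expand $R_\phi(\psi_1,\psi_2)\psi_3$ directly from the explicit formula for $D$, pass to a foliation chart where everything is expressed through the transverse K\"ahler metric $g^T$, and identify the antisymmetrized expression with $-\tfrac{1}{16}\{\{\psi_1,\psi_2\}_\phi,\psi_3\}_\phi$ by transcribing Donaldson's K\"ahler computation (keeping track of the factor coming from $\omega_\alpha=\tfrac12 d\eta_\phi$). Your covariant-constancy argument is also the paper's: the paper integrates $X_t=-\tfrac14\nabla_{g(t)}\dot\phi$ to get diffeomorphisms $f_t$ with $f_t^{*}\eta_{\phi(t)}=\eta$, exhibiting $T\cH$ as the associated bundle $\cY\times_\cG C^\infty_B(M)$, so that parallel transport is implemented by contactomorphisms intertwining the contact forms and hence preserving the Poisson brackets in which $R_\phi$ is expressed. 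The one false note is your closing claim that $\phi\mapsto 2\phi_0-\phi$ is ``readily seen'' to be an isometry with differential $-\mathrm{id}$: it is not, since the inner product at $2\phi_0-\phi$ is taken against the volume form $d\mu_{2\phi_0-\phi}$, which differs from $d\mu_{\phi}$; the paper, like Donaldson in the K\"ahler case, deduces the symmetric-space assertion from covariant constancy of the curvature alone and does not invoke this linear involution, so you should drop that sentence rather than rely on it.
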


\begin{proof} First we observe that $(M, \eta_\phi)$ are all 
equivalent as contact structures.
Let $\phi(t)$ be a path starting at $0$ in $\cH$ and consider the $t$ dependent vector field 
\[
X_t=-\frac{1}{4} \nabla_{g(t)} \dot \phi,
\]
where $g(t)$ is the metric determined by $(M, \xi, \eta(t), \Phi(t))$. Let $f_t: M\rightarrow M$
be the 1-parameter family of diffeomorphisms obtained by integrating $X(t)$ with $f_0=1_M$.
Then we compute that 
\[
\frac{d}{dt} \left(f^{*}_t\eta(t)\right)=f_t^{*}\left(L_{X_t}\eta(t)+\frac{d}{dt}\eta(t)\right)
\]
For any $t$, $d\phi(t) (\xi)=0$, we get that $d\dot\phi (\xi)=0$. Hence $\langle \xi, \nabla_{g(t)} \dot \phi\rangle_{g(t)}=0$. This implies that $X_t\in \cD(t)=ker (\eta(t))$. So we can compute
\[
L_{X_t} \eta(t)=\iota_{X_t} d\eta(t)+d(\iota_{X_t}\eta(t))=\iota_{X_t}d\eta(t). 
\]
For each $t$ fixed and $Y\in  TM$,
\[
\begin{split}
\iota_{X_t}d\eta(t)(Y)&=d\eta(t)(X_t, Y)=-2\langle X_t, \Phi(t)Y\rangle_{g(t)}=\frac{1}{2}\langle \nabla \dot \phi, \Phi(t) Y\rangle\\
&=\frac{1}{2}d_B\dot\phi(\Phi(t)Y)=-\frac{1}{2}(\Phi(t)\circ d_B)\dot\phi (Y)=-d_B^c\dot \phi(Y). 
\end{split}
\]
On the other hand, the $t$-derivative of $\eta(t)$ is $d_B^c\dot\phi$. So we have
\[
\frac{d}{dt} f_t^{*}(\eta(t))=f_t^{*}\left(L_{X_t}\eta(t)+\frac{d}{dt}\eta(t)\right)
=0,
\]
i.e. the diffeomorphism $f_t$ gives the desired contactomorphism from $(M, \eta_0)$ to $(M, \eta(t))$.
Now let $\cY\subset \cH\times Diff(M)$ be the set of pairs $(\phi, f)$ such that $f^{*}\eta_\phi=\eta.$ This is a principal bundle over $\cH$ with structure group $\cG$. Then the discussion above shows that
the connection $D$ on the tangent space of $\cH$ is induced from a $\cG$ connection on $\cY\rightarrow \cH$ via the action of $\cG$ on $C^\infty_B(M)$; that is, we have a connection preserving bundle isomorphism
\[
T\cH=\cY\times_\cG C^\infty_B(M). 
\]
And the connection $D$ is compatible with the metric since the $L^2$ norm on $C^\infty_B(M)$ is $G$ invariant.
Now we compute the curvature tensor of $\cH$. To do this we consider a 2-parameter family $\phi(s, t)$ in $\cH$, and a vector field $\psi(s, t)$ along $\phi(s, t)$. We denote $s$ and $t$ derivatives by suffixes $\phi_s, \phi_t$ etc. The curvature is given by
\[
R_\phi(\phi_s, \phi_t)\psi=(D_sD_t-D_tD_s)\psi.
\]
It is  clear that $R(\phi_s, \phi_t)\psi$ is linear in $\phi, \psi$. 
It is also clear  that all functions involved are basic functions. Hence we can do computation by using the transverse K\"ahler structure defined by $\{\cD_\phi, \eta_\phi, \Phi_\phi, g_\phi\}$. 
We can write, for example, in a foliation chart  $U_\alpha\subset M$, 
\[
( d_B \psi_1, d_B\psi_2)_\phi=( \nabla^T\psi_1, \nabla^T\psi_2)_{g^T},
\]
where $\nabla^T$ is the Levi-Civita connection of the transverse K\"ahler metric ${g^T}$ defined by $\{\cD_\phi, \eta_\phi, \Phi_\phi, g_\phi\}$. 
We also consider the projection $\pi_\alpha: U_\alpha \rightarrow V_\alpha$ and the corresponding complex structure on $V_\alpha$ is denoted as $\Phi^\alpha_\phi$. 

Expanding out, we compute
\begin{equation}\label{E-3-a}
\begin{split}
4 R_\phi(\phi_s, \phi_t)\psi=&\left(\frac{\p}{\p t}( \nabla^T \psi, \nabla^T \phi_s )_{g^T}-\frac{\p}{\p s}( \nabla^T \psi, \nabla^T \phi_t)_{g^T}\right)\\&+\left((\nabla^T\psi_s, \nabla^T\phi_t)_{g^T}-( \nabla^T\psi_t, \nabla^T\phi_s)_{g^T}\right)\\
&+\left( \nabla^T( \nabla^T \psi, \nabla^T \phi_s)_{g^T}, \nabla^T\phi_t\right)_{g^T}-\left( \nabla^T( \nabla^T \psi, \nabla^T \phi_t)_{g^T}, \nabla^T\phi_s\right)_{g^T}
\end{split}
\end{equation}
On the other hand, we can write the Possion bracket $\{\; , \;\}$ as follows. For any $f, h\in C^\infty_B(M)$, 
\[
\{f, h\}_\phi=\eta_\phi([X_f, X_h])=-d\eta_\phi(X_f, X_h)=2( \nabla^T f, \Phi_\phi^\alpha\nabla^{T} h)_{g^T}. 
\]
Hence we compute 
\begin{equation}\label{E-3-b}
\frac{1}{4}\{\{\phi_s, \phi_t\}, \psi\}_\phi=( \nabla^T( \nabla^T\phi_s, \Phi^{\alpha}_\phi \nabla^T \phi_t)_{g^T}, \Phi^{\alpha}_\phi\nabla^T\psi)_{g^T}. 
\end{equation}
Now we claim that the right hand side of \eqref{E-3-a} coincides with the right hand side of \eqref{E-3-b}.
Note that all the quantities above are only involved with the transverse K\"ahler structure and we can express all these quantities in terms of the Kahler metric $g^T_\alpha$ on $V_\alpha$. 
Hence this reduces the computations to the Kahler setting and the same argument  in \cite{Donaldson99} (Page 18) can be carried over here directly without any change. The only difference is that the K\"ahler form of $g^T_\alpha$ is $d\eta_\phi/2$ while the Possion bracket is defined by $\eta_\phi$ and $d\eta_\phi$. So there is difference of a constant factor $4$ and we get
\[
R_\phi(\phi_s, \phi_t)\psi=-\frac{1}{16}\{\{\phi_s, \phi_t\}_\phi, \psi\}_\phi. 
\]
The expression of the curvature tensor in terms of Possion brackets shows that $R_\phi$ is invariant under the action of the group $\cG$; it follows that $R_\phi$ is covariant constant and hence $\cH$ is indeed an infinite-dimensional symmetric space. 
\end{proof}

\begin{rmk}
It is clear that the Riemannian  decomposition $\cH=\cH_0\times \R$ corresponds to the Lie algebra decomposition $C^\infty_B(M)=C^\infty_{B, 0}(M)\oplus \R$. 
When $(M, \xi, \eta)$ is quasiregular, then $\xi$ generates one parameter subgroup of $\cG$  which lies in the center of $\cG$ (in quasiregular case, the orbits of $\xi$ are compact, hence are all circles, then $\xi$ generates a circle action on $M$ which preserves $\eta$), then we have the group isomorphism 
\[
\cG\approx \cG/S^1 \times S^1, 
\]
which corresponds the Lie algebra decomposition  $C^\infty_B(M)=C^\infty_{B, 0}(M)\oplus \R$. 
\end{rmk}

\section{Transverse Scalar Curvature as a Moment Map}

In this section we show that the transverse scalar curvature of a Sasaki metric is a moment map with respect to the strict contactomorphism group $\cG$. 
When $(V, \omega)$ is a symplectic manifold and let $\cJ$ be the space of almost complex structures which are compatible with $\omega$. Then   scalar curvature $R: \cJ\rightarrow C^\infty(M)$ is a moment map of symplectomorphism group  of $(V, \omega)$ which acts on  $\cJ$  \cite{Fujiki90, Donaldson99}. This point of view of moment map can be carried over to Sasaki geometry with only slight modification. 

First let us recall the definition of a moment map. 
Let $(V, \omega)$ be a symplectic manifold with a symplectic form $\omega$. Suppose that a Lie group $G$ acts on $V$ via symplectomorphisms. 
Let $\g$ be the Lie algebra of $G$. Then  any $\zeta\in \g$ induces a one-parameter subgroup $\{\phi(t)\}$ in $G$, and $\{\phi(t)\}$ induces a vector field $X_\zeta$ on $V$ since $G$ acts on $V$. 
A moment map 
for the $G$-action on $(V, \omega)$ is a map
$\mu: V\rightarrow \g^{*}$ such that
\[
d\langle \mu, \zeta\rangle =\iota_{X_\zeta}\omega.
\]
Here $\langle \mu, \zeta\rangle$ is the function from $V$ to $\R$ defined by $\langle \mu, \zeta\rangle(x)=\langle \mu(x), \zeta\rangle$. We also require that the moment map $\mu$ is $G$-equivariant with respect to the co-adjoint action of $G$ on $\g^{*}$.

Consider a compact contact manifold  $(M^{2n+1}, \eta)$. 
Let $\cK$ be the space of  $K$-contact-complex structures on $M$ which are compatible with $\eta$;  we shall assume $\cK$ is not empty. 
We shall show that the space $\cK$ can be endowed with the structure of an infinite-dimensional K\"ahler manifold. Let $\cG$ be the strict contactomorphism group which preserves the contact form $\eta$. Then $\cG$ acts on $\cK$ via
\[
(f, \Phi)\rightarrow f^{-1}_{*}\Phi f_{*}.
\]
For simplicity we consider $\Phi\in \cK$ is integrable. Let $\cK_{int} \subset \cK$ defined as
\[
\cK_{int}=\{\Phi\in \cK: \Phi\; \mbox{ is integrable}\}. 
\]
We assume $\cK_{int}$ is not empty. 
We want to identify a moment map for the action of $\cG$ on $\cK_{int}$. For each $\Phi\in \cK_{int}$, $(M, \eta, \Phi)$ defines a Sasaki metric $g$, and let $\{\cD, \frac{1}{2}d\eta, \Phi|_\cD\}$ be its transverse K\"ahler metric.  We have 
\begin{theo}\label{T-2}The map $\Phi\rightarrow R^{T}(\Phi)$ is an {\it equivariant} moment map for the $\cG$-action on $\cK_{int}$, under the natural pairing:
\[
(R^T, H)\rightarrow \int_M R^T H d\mu_\eta. 
\]
\end{theo}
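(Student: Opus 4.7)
My plan is to reduce Theorem~\ref{T-2} to the Fujiki--Donaldson moment map theorem \cite{Fujiki90, Donaldson97} on the transverse almost K\"ahler manifold, using the foliation charts $\pi_\alpha : U_\alpha \to V_\alpha$ and the isomorphism $d\pi_\alpha : \cD_p \to T_{\pi_\alpha(p)}V_\alpha$ from Section~2 to transport every pointwise calculation on $M$ to the transverse picture on $V_\alpha$.

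First I would equip $\cK_{int}$ with its formal K\"ahler structure. A tangent vector $A \in T_\Phi \cK_{int}$ is an endomorphism of $TM$ satisfying $A\xi = 0$, $\eta \circ A = 0$, $A\Phi + \Phi A = 0$, $d\eta(A\cdot,\cdot) + d\eta(\cdot, A\cdot) = 0$, and $L_\xi A = 0$; under $d\pi_\alpha$ it descends on each $V_\alpha$ to an infinitesimal deformation of the compatible almost complex structure $\Phi_\alpha$. The $\cG$-invariant symplectic form on $\cK_{int}$ is
\begin{equation*}
\Omega_\Phi(A, B) = \frac{1}{2}\int_M \mathrm{tr}(\Phi A B)\, d\mu_\eta,
\end{equation*}
which matches, through $\pi_\alpha$, Donaldson's symplectic form on the space of $\omega_\alpha$-compatible almost complex structures on $V_\alpha$. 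Note that $d\mu_\eta$ is independent of $\Phi \in \cK_{int}$ since $\eta$ is fixed.

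Next I would identify the infinitesimal action of $\g = C^\infty_B(M)$. For $H \in C^\infty_B(M)$ the strict contact vector field $X_H$ satisfies $\eta(X_H) = H$ and $\iota_{X_H} d\eta = -dH$ (using $dH(\xi)=0$); since $d\pi_\alpha(\xi)=0$ and $X_H - H\xi \in \cD$, the pushforward $d\pi_\alpha(X_H)$ equals the Hamiltonian vector field of $H$ on $(V_\alpha, \omega_\alpha)$ up to the universal factor from $d\eta = 2\omega_\alpha$. The induced tangent vector on $\cK_{int}$ is $A_H := L_{X_H}\Phi$, whose transverse projection is precisely the infinitesimal Hamiltonian action on $\Phi_\alpha$. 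The moment-map identity to verify is
\begin{equation*}
\frac{d}{dt}\Big|_{t=0} \int_M R^T(\Phi(t))\, H \, d\mu_\eta = \Omega_\Phi(A_H, A), \qquad \dot\Phi(0) = A.
\end{equation*}
Because $H$, $R^T$, and the deformation of $\Phi$ are all basic, and $d\mu_\eta = (2^n n!)^{-1}\eta \wedge (d\eta)^n$ factors through the Reeb foliation as $\eta$ wedged with the pullback of $\omega_\alpha^n/n!$ (up to a universal constant), this identity reduces pointwise in foliation charts to the Fujiki--Donaldson identity on $(V_\alpha, \omega_\alpha)$. As in the curvature computation of Section~3, the only book-keeping change is a universal constant arising from $d\eta = 2\omega_\alpha$.

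Equivariance is immediate: $R^T$ is an isometric invariant of the transverse K\"ahler metric, so for $f \in \cG$ we have $R^T(f_*^{-1}\Phi f_*) = f^* R^T(\Phi)$; combined with $f^*\eta = \eta$ (hence $\cG$-invariance of $d\mu_\eta$), this yields $\cG$-equivariance of $\mu = R^T$ under the pairing $(R^T, H) \mapsto \int_M R^T H \, d\mu_\eta$. The main obstacle is the careful justification of the transverse reduction in the moment map identity: one must check that every integration by parts appearing in Donaldson's transverse computation, combined with integration over $M$ against $d\mu_\eta$, coincides with genuine integration by parts against basic functions on $M$, and that a basic deformation $A$ preserves the K-contact and compatibility conditions along the family $\Phi(t)$, so that $R^T(\Phi(t))$ remains a well-defined basic function whose first variation is given by Donaldson's formula for the variation of transverse scalar curvature. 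Both points follow from the compatibility of the Reeb foliation with basic tensors and the $\cG$-invariance of $d\mu_\eta$.
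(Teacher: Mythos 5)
Your proposal is correct and follows essentially the same route as the paper: both transport everything to the transverse K\"ahler picture via the foliation charts $\pi_\alpha:U_\alpha\to V_\alpha$, identify the infinitesimal action as $L_{X_\phi}\Phi$, invoke the standard variation formula for the (transverse) scalar curvature, and rest the final step on integration by parts over $M$ justified by the basicness of all integrands (the paper cites Proposition 4.5 of \cite{He} for exactly this point). The only difference is one of explicitness --- the paper writes out $\cP(\phi)$ and $\cQ(A)$ in coordinates rather than citing the Fujiki--Donaldson identity wholesale --- and you should phrase the reduction as an identity of integrals over the closed manifold $M$ rather than a ``pointwise'' one, since the two integrands agree only up to a divergence; your final paragraph already acknowledges this.
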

First observe that
\[
T_K\cK=\{A: TM\rightarrow TM; A\xi=0, L_\xi A=0, A\Phi+\Phi A=0, d\eta(\Phi A \cdot, \;\cdot)+d\eta( \cdot, \; \Phi A \cdot)=0\}.
\]
Note that if $A\in T_K\cK$, then $\Phi A\in T_K\cK$; hence we can define a natural almost structure  $J_\cK: T_K\cK\rightarrow T_K\cK$ on $\cK$ such that
\[
J_\cK A=\Phi A. 
\] 
We can define a natural metric on $\cK$.  For any $A\in T_K\cK$,  we can identify $A$ with
\[
A(X, Y)=g_\Phi(AX, Y), X, Y\in TM. 
\]
It is clear that $A$ is anti-$\Phi$ invariant and symmetric. We can  identify $A$ with a section of  $T^{*}M\otimes T^{*}M$.  The metric $g_\Phi$ induces a metric on  $T^{*}M\otimes T^{*}M$.  So we can define a metric on $T_\Phi\cK$
 by integration over $M$,
\[
g_\cK(A, B)=\int_M \langle A, B\rangle_{g_{ \Phi}} d\mu.
\]
For $p\in M$,  if $\cD_p=span\{e_i: 1\leq i\leq 2n\}$, and $A_p:\cD_p\rightarrow \cD_p$ such that $A_pe_i=A_i^je_j$. Then $\langle A, B \rangle_p=trace(A_pB_p)=A_i^jB_j^i$.
It is clear that $g_\cK$ is compatible with $J_\cK$; hence it defines an Hermitian metric on $\cK$.  As in the (almost) K\"ahler setting, $(\cK, g_\cK, J_\cK)$ induces a natural K\"ahler structure on $\cK$. 
\begin{prop}$(\cK, g_\cK, J_\cK)$ is K\"ahler and $\cK_{int}$ is an analytic submanifold.
\end{prop}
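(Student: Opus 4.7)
The plan is to reduce the infinite-dimensional K\"ahler verification to a pointwise statement on a finite-dimensional Hermitian symmetric space. For each $p\in M$, the set of linear complex structures on the symplectic vector space $(\cD_p, d\eta_p)$ compatible with $d\eta_p$ is canonically identified with the Siegel domain $\mathrm{Sp}(2n,\R)/U(n)$, which carries its standard $\mathrm{Sp}(2n,\R)$-invariant K\"ahler structure. The space $\cK$ is realized as the $L_\xi$-invariant smooth sections of the associated fiber bundle over $M$, and $g_\cK$, $J_\cK$, $\omega_\cK:=g_\cK(J_\cK\cdot,\cdot)$ arise by integrating the fiberwise K\"ahler tensors against $d\mu$. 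Once this identification is in place, each K\"ahler axiom on $\cK$ reduces to its pointwise counterpart on $\mathrm{Sp}(2n,\R)/U(n)$, which is classical.

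First I would settle the algebraic points: that $J_\cK A\in T_\Phi\cK$ whenever $A\in T_\Phi\cK$, that $J_\cK^2=-\mathrm{Id}$, and that $g_\cK(J_\cK A, J_\cK B)=g_\cK(A,B)$. The conditions defining $T_\Phi\cK$ force $A$ to carry $\cD$ into $\cD$ and anti-commute with $\Phi$ there, so $\Phi A$ again satisfies them and $\Phi^2 A=-A$ on $\cD$; inspection of each defining condition then shows $\Phi A\in T_\Phi\cK$. The Hermitian identity reduces to the pointwise trace relation $\mathrm{tr}((\Phi A)(\Phi B))=\mathrm{tr}(AB)$ on $\cD$, which is immediate from $\Phi A=-A\Phi$ and $\Phi^2|_\cD=-\mathrm{Id}$.

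The substantive content is closedness of $\omega_\cK$ and integrability of $J_\cK$. Since $d\mu$ is fixed (independent of $\Phi$) and the integrand of $\omega_\cK$ is a pointwise algebraic expression in $\Phi$, exchanging the exterior derivative with the integral reduces $d\omega_\cK=0$ to the closedness of the fiberwise K\"ahler form on $\mathrm{Sp}(2n,\R)/U(n)$, which holds because that space is K\"ahler. Integrability of $J_\cK$ is handled analogously: the Nijenhuis tensor of $J_\cK$ at $\Phi$ is computed pointwise and coincides with the Nijenhuis tensor of the fiberwise complex structure, which vanishes because $\mathrm{Sp}(2n,\R)/U(n)$ is a complex manifold. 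The K-contact constraint $L_\xi\Phi=0$ is preserved by $J_\cK$ because $L_\xi$ commutes with $\Phi$, so restricting from the ambient section space to $\cK$ retains both properties. This is essentially Donaldson's argument in the almost K\"ahler case, transported to the Sasaki setting by consistently working with $\xi$-invariant objects.

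For the second assertion, integrability of a K-contact structure $\Phi$ is equivalent to the vanishing of the transverse Nijenhuis tensor of $\Phi|_\cD$, an expression quadratic in $\Phi$ and linear in its first derivatives, hence real-analytic. So $\cK_{int}$ is a priori a real-analytic subvariety of $\cK$. The step I expect to be the main obstacle is upgrading this to a genuine analytic submanifold near a given Sasaki structure: following Kodaira-Spencer, I would construct an elliptic deformation complex for the transverse holomorphic structure, identify $T_\Phi\cK_{int}$ with its first cohomology, and invoke an implicit function theorem argument once the linearization is shown to be Fredholm with constant-rank range in a neighborhood.
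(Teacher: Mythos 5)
Your argument for the K\"ahler statement is essentially the paper's: the paper also realizes $\cK$ as the $\xi$-invariant sections of a fibre bundle over $M$ with fibre the Hermitian symmetric space $Sp(2n,\R)/U(n)$ and lets $\cK$ inherit the fibrewise K\"ahler structure, integrated against the fixed volume form $d\mu$ (which, as you note, is independent of $\Phi$); your pointwise algebraic checks and the reduction of $d\omega_\cK=0$ to the fibre are exactly the content the paper delegates to Fujiki and Donaldson. One small difference: the paper obtains integrability of $J_\cK$ not from a Nijenhuis computation but from the existence of a \emph{global holomorphic coordinate chart} on $\cK$ coming from the ball model of the Siegel upper half space applied fibrewise; this is cleaner and, more importantly, it is the tool you are missing for the second assertion. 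In that global chart the (transverse) Nijenhuis tensor of $\Phi$ is a \emph{holomorphic} function of the point of $\cK$ (Fujiki, Theorem 4.2; Donaldson's Lemma in the almost K\"ahler case), so $\cK_{int}$ is cut out by holomorphic equations and is a complex-analytic subvariety --- which is the statement actually being asserted. Your plan only records that the integrability condition is real-analytic, which is strictly weaker, and then tries to recover smoothness through a Kodaira--Spencer deformation complex and an implicit function theorem with a constant-rank hypothesis; that hypothesis can fail (the rank of the linearized $\bar\p_B$-operator can jump), and none of that machinery is needed for, nor does it deliver, the subvariety statement. Replace that last step with the observation that the defining equations are holomorphic in the global Siegel/ball coordinates and the proof closes along the paper's lines.
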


Note that $\Phi\in \cK$ can be identified as an almost complex structure on $\cD$ which is invariant under the action generated by $\xi$,  hence $\cK$ can be viewed the space of sections of a fibre bundle on $M$ with fibre the complex homogeneous space $Sp(2n, \R)/U(n)$. Then $\cK$ inherits a complex structure from that of the fibre. Indeed, there is a global holomorphic coordinate chart if we use the ball model of the Siegel upper half space in the usual way, see \cite{Fujiki90} Theorem 4.2 and \cite{Donaldson99} for more details.

Now we are in the position to prove Theorem \ref{T-2}.
\begin{proof}
 Fix a K-contact-complex structure $\Phi \in \cK_{int}$; let
\[
\cP: C^\infty_B(M)\rightarrow \Gamma(T\cK) 
\]
be the operator representing the infinitesimal action of $\cG$ on $\cK_{int}$ and
\[
\cQ: \Gamma(T\cK)\rightarrow C^\infty_B(M)
\]
be the operator which represents the derivative of $\Phi\rightarrow R^T(\Phi)$. By the definition of a moment map we need to show that for all $\phi\in C^\infty_B(M)$, $A\in \Gamma(T\cK)$, 
\[
g_\cK(J_\cK  \cP(\phi), A)=\langle \phi, \cQ(A)\rangle.
\]
The operator $\cP$ can be factored as $\cP=\cP_2\cP_1$, where $\cP_1$ maps $\phi$ to the Hamiltonian vector field $X_\phi$ and $P_2$ maps a vector filed $X$ to the infinitesimal variation given by the Lie derivative $L_X \Phi$.  
We choose a coordinate chart $U_\alpha=\{(x^0, x^1, \cdots, x^{2n})\}$ as in Section 2, such that
\[
\eta=dx^0-a_idx^i, \xi=\frac{\p}{\p x_0}, \cD=span\left\{\frac{\p}{\p x_i}+a_i\frac{\p}{\p x^0}, 1\leq i\leq 2n\right\}. 
\] 
Let $V_\alpha=\{(x^1, \cdots, x^{2n})\}$ such that $\pi_\alpha: U_\alpha\rightarrow V_\alpha$ is a submersion. 
Recall  $\omega_\alpha=d\eta/2=(\omega_{ij}), \Phi_\alpha=(\Phi_{i}^j)$  and $g_\alpha^T$  the transverse K\"aher structure. For simplicity we suppress $\alpha$ and write
$g^T_\alpha=(g_{ij})=(\omega_{ik}\Phi^k_j)$, $1\leq i, j\leq 2n$.  It is clear that on $U_\alpha$, we can write $A=(A^j_i)$ for some functions $A^j_i$ independent of $x^0$, as we have discussed in Section 2. Hence we can define $A_\alpha$ on $V_\alpha$, which we still denote as $A_\alpha=(A^j_i)$. 
Note that  all quantities involved in the computations can be written locally on $V_\alpha$ and we have the isometry of  $\{\cD, \frac{1}{2}d\eta, \Phi|_\cD\}$ with $g^T$. So we can do all (local) computations on $V_\alpha$.
Since $\iota_{X_\phi}d\eta=-d\phi$,  we have
\[
\omega\left(X_\phi, \frac{\p}{\p x_j}\right)=-\frac{1}{2}\frac{\p \phi}{\p x_j}.
\] 
It follows that
\[
X_\phi=X^i\frac{\p }{\p x_i}=-\frac{1}{2}\Phi^j_kg^{ki}\frac{\p \phi}{\p x_j}. 
\]
Suppose that
\[
L_{X_\phi}\Phi=B^i_j\frac{\p}{\p x_i}\otimes dx_j;
\]
then we have
\[
B^i_j\frac{\p }{\p x_i}=L_{X_\phi}\Phi\left(\frac{\p}{\p x_j}\right)= L_{X_\phi}\left(\Phi\frac{\p}{\p x_j}\right)-\Phi\left(L_{X_\phi} \frac{\p}{\p x_j}\right).
\]
Hence
\[
B^i_j=\frac{1}{2} \Phi_j^k\Phi^p_lg^{li}\phi_{kp}-\frac{1}{2}\Phi^i_kg^{kl}\Phi^p_l\phi_{pj}.
\]
It follows that
\[
\cP(\phi)=\frac{1}{2}\left(\Phi_i^k\Phi_j^l\phi_{kl}-\phi_{ij}\right).
\]
We can then compute the pairing 
\[
g_\cK(J_\cK\cP(\phi),  A)=-\int_M \langle \cP(\phi), J_\cK A\rangle d\mu=\int_M g^{jk}g^{pl}\Phi^i_p\phi_{ij}A_{kl}d\mu,
\]
where $A_{ij}=g_{ik}A^k_j$, $1\leq i, j, k, l, p\leq 2n$. Now we  compute $\cQ(A)$. Since $\left(\delta g^T\right)_{ij}=\omega_{ik}A^k_j=\Phi^k_iA_{kj}$, the well-known formula of the variation of the scalar curvature $R^T$ is given by
\[
\delta R^T=-2g^{ij}g^{kl}\left(\delta g\right)_{il}R^{T}_{jk}+g^{ik}g^{jl}\left(\delta g^T\right)_{ij, kl}-g^{ij}g^{kl}\left(\delta g^T\right)_{ij, kl}.
\]
Note that the metric $g^T$ and the Ricci curvature  $R^T_{ij}$ are  $\Phi$-invariant and $A$ is anti $\Phi$-invariant, we can get that 
\[
g^{ij}g^{kl}\left(\delta g^T\right)_{il}R^{T}_{jk}=0, g^{ij}\left(\delta g^T\right)_{ij}=0. 
\]
It follows that
\[
\cQ(A)=\delta R^T=g^{ik}g^{jl}\left(\delta g^T\right)_{ij, kl}=g^{ik}g^{jl} \left(\Phi_i^pA_{pj}\right)_{, kl}.
\]
We can compute, integration by parts, 
\[
\langle \phi, \cQ(A)\rangle=\int_M \phi g^{ik}g^{jl}\left(\Phi_i^pA_{pj}\right)_{, kl}d\mu=\int_M g^{ik}g^{jl} \Phi_i^p \phi_{kl}A_{pj}d\mu.
\]
The integration by parts above can be justified as in Proposition 4.5 \cite{He}, by noting that all the integrands involved are basic, namely, are invariant under the group action generated by $\xi$. 
Hence we get the desired equality
\[
g_\cK(J_\cK \cP(\phi),  A)=\langle \phi, \cQ(A)\rangle.
\]
\end{proof}

\begin{rmk}The similar results should hold for  $\cK$, the space of $K$-contact complex structures which are compatible with a fixed contact form. One should consider the transverse Hermitian scalar curvature defined by the almost transverse K\"ahler structure,  but   have to take the Nijenhuis tensor into account, as in \cite{Donaldson99} for  the almost K\"ahler setting. We shall assume integrability for simplicity. 
\end{rmk}

\section{Canonical Metrics in Sasaki Geometry}
Calabi's extremal metric problem \cite{Calabi82, Calabi85} in K\"ahler geometry are closely related to the geometry of the space of K\"ahler metrics and the space of complex structures which are compatible with a fixed K\"ahler form. We refer the reader to the papers of S.K. Donaldson \cite{Donaldson97, Donaldson99} for details. 
Calabi's extremal  problem can also be extended to  Sasaki geometry, see \cite{BGS1, BGS2} for example. 

We have seen that the standard picture in \cite{Donaldson97, Donaldson99} can be applied to Sasaki geometry; hence
canonical metric problems in Sasaki geometry are also closely related to the geometry of $\cH$ and $\cK$.  We shall roughly repeat the  picture described in \cite{Donaldson97, Donaldson99} for Sasaki context as follows. 

If $(V, \omega, J)$ is a K\"ahler manifold and assume there is an action of a compact connected group $G$ on $V$ which preserves the K\"ahler structure. Let $\mu$ be the corresponding moment map. This induces a holomorphic action of the complexified group $G^{\C}$. Then the Kempf-Ness theorem relates the complex quotient by $G^{\C}$ to the symplectic reduction by $G$ (\cite{DK}).

Now let $(M, \eta, \xi)$ be a Sasaki manifold.
The strict contactomorphism group $\cG$ acts on the K\"ahler manifold $\cK^{int}$ as holomorphic isometries, and the transverse scalar curvature $R^T: \cK^{int}\rightarrow C^\infty_{B}(M)$ is an equivariant moment map of $\cG$-action on $\cK^{int}$.   
Let  $m=R^T-\underline{R}$, where $\underline{R}$ is the average of the transverse scalar curvature, depending only on the basic  class $[d\eta]_B$.   Then $m:\cK^{int}\rightarrow C^\infty_B(M)$ is also a moment map of $\cG$. 
If there is a {\it complexification} group $\cG^c$ of $\cG$, then by the {\it standard picture},   a constant transverse scalar curvature metric, which is a zero point of the moment map $m$, corresponds to a stable complex orbit of $\cG^c$ action and we expect the identification
\[
K^{int}_s/\cG^c=m^{-1}(0)/\cG. 
\]

In general,  $\cG^c$ might not exist. However we can complexify the Lie algebra of $\cG$ and it acts on $\cK^{int}$ automatically since $\cK^{int}$ is a complex manifold. At each point $\Phi\in \cK^{int}$ we get a subspace of $T_\Phi\cK^{int}$ spanned by this complexification action and these subspaces form an integrable, holomorphic distribution on $\cK^{int}$. Thus we get a distribution of $\cK^{int}$ which plays the role of the complex orbits. By definition, the infinitesimal action of $\phi$ on $\cK^{int}$ is given by $L_{X_\phi} \Phi$ for any $\Phi\in T_\Phi\cK^{int}$, then 
the infinitesimal action of a function $\i \phi\in C^\infty_B(M)\otimes \C$ is $L_{\Phi X_\phi}\Phi$,  the natural action of $\Phi X_\phi$ on $\Phi$. Thus the geometric effect of applying $\i \phi$ is the same as keeping the  transverse holomorphic structure induced by $\Phi$ fixed and varying the transverse symplectic form $\frac{1}{2}d\eta$ to
\[
\frac{1}{2}d\tilde\eta=\frac{1}{2}d\eta-L_{\Phi X_\phi} \left(\frac{1}{2}d\eta\right)=\frac{1}{2}d\eta+\i \p \bar \p \phi,
\]
which corresponds to the space of Sasaki metrics $\cH$. 

We shall recall the definition of K-energy in Sasaki geometry \cite{Mabuchi86, GZ2}, which is defined on $\cH$ by specifying its variation
\[
\delta \cM=-\int_M \delta \phi (R^T-\underline{R})d\mu_\phi.
\]
A critical point of $\cM$ is a constant transverse scalar curvature and $\cM$ is convex along geodesics in $\cH$ \cite{GZ2}.

We can also ask the similar questions as in \cite{Donaldson99} regarding the existence of constant transverse scalar curvature metrics, which can be viewed as an analogue of the Hilbert criterion for stability in geometric invariant theory.

\begin{pr}The following are equivalent:

(1). There is no critical K\"ahler metric in $\cH_0$.

(2). There is an infinite geodesic ray $\phi_t\in \cH_0$, such that $t\rightarrow \infty$, the derivative of K-energy is less than zero along the geodesic ray $\phi_t$,
\[
\int_M \dot\phi (\underline{R}-R^T)d\mu_\phi<0.
\]

(3). For any point $\phi\in \cH_0$ there is a geodesic ray as in (2) starting at $\phi_0$. 
\end{pr}

\bibliographystyle{amsplain}

\end{document}